\documentclass[12pt]{amsart}
\setlength{\textwidth}{16cm}
\setlength{\textheight}{23cm}
\setlength{\oddsidemargin}{-0cm}
\setlength{\evensidemargin}{-0cm}
\setlength{\topmargin}{-0cm}
\usepackage{amsmath}
\usepackage{amsfonts}
\usepackage{amssymb}

\usepackage{tikz}
\usetikzlibrary{arrows}
\usetikzlibrary{matrix}
%%\allowdisplaybreaks
%%\pagestyle{empty}
%%\thispagestyle{empty}
%
\makeatletter
\def\@seccntformat#1{%
  \protect\textup{%
    \protect\@secnumfont
    \expandafter\protect\csname format#1\endcsname 
    \csname the#1\endcsname
    \protect\@secnumpunct
  }%
}

\begin{document}
\title[zero coefficients of the powers of the determinant]
{
A condition for the existence of \\
zero coefficients in \\
the powers of the determinant polynomial}
\author[M. Itoh]{Minoru Itoh}
\address{Department of Mathematics and Computer Science, 
         Faculty of Science,
         Kagoshima University, Kagoshima 890-0065, Japan}
\email{itoh@sci.kagoshima-u.ac.jp }
\author[J. Shimoyoshi]{Jimpei Shimoyoshi}
\address{OKI Software Co., Ltd., Saitama 335-8510, Japan }
\email{k6434019@kadai.jp}
\date{}
\begin{abstract}
We discuss the existence of zero coefficients 
in the powers of the determinant polynomial of order $n$.
D. G. Glynn proved that the coefficients of the $m$th power of the determinant polynomial are all nonzero,
if $m = p-1$ with a prime $p$.
We show that the converse also holds, if $n \geq 3$.
The proof is quite elementary.
\end{abstract}
\thanks{%%The first author is the corresponding author.
This research was partially supported by JSPS Grant-in-Aid for Scientific Research (C) 16K05067.}
\keywords{determinant, Latin square, Alon--Tarsi conjecture, hyperdeterminant}
\subjclass[2010]{Primary 05B15, 05B20, 15A15; Secondary 11A41}
\maketitle
\theoremstyle{plain}
   \newtheorem{theorem}{Theorem}[section]
   \newtheorem{proposition}[theorem]{Proposition}%%[section]
   \newtheorem{lemma}[theorem]{Lemma}%%[section]
   \newtheorem{corollary}[theorem]{Corollary}%%[section]
\theoremstyle{definition}
   \newtheorem{definition}[theorem]{Definition}
   \newtheorem{conjecture}[theorem]{Conjecture}
\theoremstyle{remark}
   \newtheorem*{remark}{Remark}
   \newtheorem*{remarks}{Remarks}
\numberwithin{equation}{section}
%

%%%%%%%%%%%%%%%%%%%%%%%%%%%%%%%%%%%%%%%%%%%%%%%%%%%%%%%%%%%%%%%%%%%%%%%%%%%%%%%%%%
%
\section{Introduction}
%
%%%%%%%%%%%%%%%%%%%%%%%%%%%%%%%%%%%%%%%%%%%%%%%%%%%%%%%%%%%%%%%%%%%%%%%%%%%%%%%%%%
%
We consider the expansion of the powers of the determinant polynomial,
and discuss the existence of zero coefficients.
D. G. Glynn proved that the coefficients of the $m$th power of the determinant polynomial of order $n$
are all nonzero,
if $m = p-1$ with a prime $p$.
This result is remarkable because this leads a proof of 
the Alon--Tarsi conjecture in dimension $p-1$.
In this article,
we show that the converse of Glynn's result also holds, if $n \geq 3$.
The proof is quite elementary.

Let us explain the assertion precisely.
Let $X = (x_{ij})_{1 \leq i,j \leq n}$ be 
an $n$ by $n$ matrix whose entries are indeterminates.
We define the coefficients $C_L$ by the following expansion of $(\det X)^m$:
\begin{equation}
   (\det X)^m = \sum_{L \in \Psi(m)} C_L x^L.
\label{eq:definition of C_L}
\end{equation}
Here $\Psi(m)$ is the set of all $n$ by $n$ matrices
of nonnegative integers with each row and column summing to $m$:
\[
   \Psi(m) = \left\{ (l_{ij})_{1 \leq i,j \leq n}  
   \,\left|\,
   \begin{aligned}
   &l_{ij} \in \mathbb{Z}_{\geq 0}, \\
   &\textstyle\text{$\sum_{i=1}^n l_{ij} = m$ for any $j = 1,2,\ldots,n$}, \\
   &\textstyle\text{$\sum_{j=1}^n l_{ij} = m$ for any $i = 1,2,\ldots,n$} 
   \end{aligned}
   \right.
   \right\}.
\]
Moreover, 
we put $x^L = \prod_{1 \leq i,j \leq n} x_{ij}^{l_{ij}}$ for $L = (l_{ij})_{1 \leq i,j \leq n}$.

D. G. Glynn proved the following theorem for these coefficients $C_L$:

\begin{theorem}\label{thm:when m = p-1}\slshape
   If $p$ be prime, 
   we have $C_L \ne 0$ for all $L \in \Psi(p-1)$.
\end{theorem}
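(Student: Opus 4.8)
The plan is to prove the equivalent statement that $C_L \not\equiv 0 \pmod p$ for every $L \in \Psi(p-1)$; since each $C_L$ is an integer, this suffices. The starting point is Fermat's little theorem in the form that, regarded as a function on $M_n(\mathbb{F}_p)$, the polynomial $(\det X)^{p-1}$ is exactly the indicator $\mathbf{1}[\det X \neq 0]$ of the invertible matrices. Because $\det X$ is multilinear, every variable occurs to degree at most $p-1$ in $(\det X)^{p-1}$ (consistently with $l_{ij}\le p-1$ in \eqref{eq:definition of C_L}), so the polynomial $\sum_L C_L x^L$ is already reduced modulo the relations $x_{ij}^p = x_{ij}$. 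Hence its coefficients modulo $p$ are recovered from this single function by the standard finite-field interpolation formula, and the primality of $p$ enters decisively at this very first step.

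First I would make that interpolation explicit. Starting from $g(x) = \sum_{A}g(A)\prod_{i,j}\bigl(1-(x_{ij}-a_{ij})^{p-1}\bigr)$ and using $\binom{p-1}{k}\equiv(-1)^k\pmod p$, one reads off the coefficient of each $x_{ij}^{l_{ij}}$. Writing $S=\{(i,j):l_{ij}\ge 1\}$, the cells outside $S$ contribute the constant term $1-a_{ij}^{p-1}=\mathbf{1}[a_{ij}=0]$ and so force $a_{ij}=0$, leaving the closed form
\[
C_L \equiv (-1)^{|S|}\sum_{\substack{A\in GL_n(\mathbb{F}_p)\\ \operatorname{supp}A\subseteq S}}\ \prod_{(i,j)\in S} a_{ij}^{\,p-1-l_{ij}} \pmod p .
\]
As a consistency anchor, for a ``pure permutation'' exponent $L=(p-1)P_\sigma$ the value can be read off instantly from the Frobenius identity $(\det X)^p=\det(x_{ij}^p)$, valid over $\mathbb{F}_p$ because $\det$ has coefficients $\pm1$: multiplying $\sum_L C_L x^L$ by $\det X$ and comparing exponents shows that the only $L$ contributing to the monomial $x^{pP_\rho}$ is $L=(p-1)P_\rho$, whence $C_{(p-1)P_\rho}\equiv 1$.

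The hard part will be to show that the exponential sum above is a unit modulo $p$ for every admissible $L$, i.e. that the many ways of writing $L$ as an ordered sum $P_{\sigma_1}+\dots+P_{\sigma_{p-1}}$ of permutation matrices do not collectively cancel. Equivalently, in the multinomial form $C_L=\sum_{(k_\sigma)}\binom{p-1}{(k_\sigma)}\prod_\sigma\operatorname{sgn}(\sigma)^{k_\sigma}$, each coefficient $\binom{p-1}{(k_\sigma)}$ is individually a unit modulo $p$ (since $\sum_\sigma k_\sigma=p-1<p$ leaves no factor of $p$ in numerator or denominator), so the entire difficulty is the interference of signs across distinct Birkhoff decompositions of $L$. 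I would attack this by induction on $n$: a Laplace expansion of $\det X$ along the first column, fed through the multinomial theorem, presents $(\det X)^{p-1}$ as $\binom{p-1}{(l_{i1})_i}\,(\pm1)\,\prod_i M_{i1}^{\,l_{i1}}$ at the level of the first column, isolating a unit multinomial factor and reducing the claim to the non-vanishing of a coefficient in a product of $(n-1)\times(n-1)$ minors. Controlling that residual coefficient — where the minors share variables and the inductive hypothesis does not apply verbatim — is precisely where the real work, and the essential second use of the primality of $p$, must be concentrated.
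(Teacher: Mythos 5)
Your setup is sound as far as it goes: reducing to $C_L \not\equiv 0 \pmod p$, observing that $(\det X)^{p-1}$ is already reduced (each $x_{ij}$ appears with exponent at most $p-1$), identifying it as a function on $M_n(\mathbb{F}_p)$ with the indicator of $GL_n(\mathbb{F}_p)$, and the interpolation formula
\[
C_L \equiv (-1)^{|S|}\sum_{\substack{A\in GL_n(\mathbb{F}_p)\\ \operatorname{supp}A\subseteq S}}\ \prod_{(i,j)\in S} a_{ij}^{\,p-1-l_{ij}} \pmod p
\]
are all correct (as is the consistency check $C_{(p-1)P_\rho}\equiv 1$ via Frobenius). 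But this is only a reformulation: the theorem is exactly the assertion that this sum over $GL_n(\mathbb{F}_p)$ never vanishes modulo $p$, equivalently that the signed Birkhoff decompositions of $L$ do not cancel, and your proposal stops precisely there. The closing induction sketch does not close the gap: after Laplace expansion along the first column, $(\det X)^{p-1}$ is a sum of terms $\prod_i M_{i1}^{l_{i1}}$ in which the $(n-1)\times(n-1)$ minors $M_{i1}$ share variables, so the inductive hypothesis (which concerns a single power of a single determinant in independent variables) says nothing about the coefficients of such a product; moreover the multinomial expansion produces many exponent patterns $(l_{i1})_i$ contributing to the same monomial $x^L$, and cross-cancellation among them is exactly the phenomenon you need to rule out. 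You acknowledge this yourself, which makes the proposal an honest reduction plus a research plan, not a proof.

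For comparison: the paper does not give a self-contained argument either; it attributes the theorem to Glynn \cite{G2}, whose proof evaluates (in effect) the sum you wrote down by means of the characteristic-$p$ hyperdeterminant of \cite{G1}, obtaining the sharper statement $L!\,C_L \equiv (-1)^n \pmod p$, with hyperdeterminant-free proofs later given in \cite{K1}, \cite{K2}. Your interpolation formula is a reasonable entry point (it is close in spirit to how the known proofs begin, and your observation that the primality of $p$ makes every multinomial coefficient $\binom{p-1}{(k_\sigma)}$ a unit is a genuine and relevant remark), but the evaluation of the exponential sum --- the actual content of Glynn's theorem --- is missing.
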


\begin{remark}
   Actually, Glynn proved a stronger theorem,
   namely, that we have $L! C_L \equiv (-1)^n \pmod p$ for all $L \in \Psi(p-1)$. 
   Here we put $L! = \prod_{i,j=1}^n l_{ij}!$ for $L = (l_{ij})_{1 \leq i,j \leq n}$.
%%   Here we put $L! = \sum_{i,j=1}^n l_{ij}!$ for $L = (l_{ij})_{1 \leq i,j \leq n}$.
\end{remark}

In the present article, we prove that the inverse of Theorem~\ref{thm:when m = p-1} also holds, when $n \geq 3$:

\begin{theorem}\label{thm:when m ne p-1}\slshape
   Assume that $n \geq 3$.
   Let $m$ be a natural number which cannot be expressed as $m = p-1$ with a prime $p$.
   Then, there exists $L \in \Psi(m)$ satisfying $C_L = 0$.
\end{theorem}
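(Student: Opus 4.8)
The plan is to reduce the statement to $n=3$ and then to convert the vanishing of a single coefficient into an elementary arithmetic fact about the compositeness of $m+1$. The starting point is the combinatorial meaning of $C_L$. Expanding $(\det X)^m$ and collecting terms, one gets
\[
   C_L = \sum \frac{m!}{\prod_{\sigma} k_\sigma!}\,\prod_\sigma (\operatorname{sgn}\sigma)^{k_\sigma},
\]
the sum ranging over all families of nonnegative integers $(k_\sigma)_{\sigma\in S_n}$ with $\sum_\sigma k_\sigma P_\sigma = L$, where $P_\sigma$ is the permutation matrix of $\sigma$ (the condition $\sum_\sigma k_\sigma=m$ is then automatic). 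I would first record a multiplicativity property: if $L=L'\oplus L''$ is block diagonal, then $C_L=C_{L'}C_{L''}$, because every $P_\sigma$ appearing in a decomposition must respect the blocks. Taking $L=L_0\oplus D$ with $L_0$ of size $3$ and $D$ the $(n-3)\times(n-3)$ diagonal matrix all of whose diagonal entries equal $m$ (each $1\times1$ block $(m)$ contributing a factor $1$), it suffices to produce, for $n=3$ and every $m$ with $m+1$ composite, a matrix $L_0\in\Psi(m)$ with $C_{L_0}=0$.

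For $n=3$ the six matrices $P_\sigma$ span a $5$-dimensional space and satisfy the single linear relation
\[
   P_{\mathrm{id}}+P_{(1\,2\,3)}+P_{(1\,3\,2)} \;=\; P_{(1\,2)}+P_{(1\,3)}+P_{(2\,3)}\;(=J),
\]
where $J$ is the all-ones matrix. Hence the set of decompositions of a fixed $L$ is a single integer segment: writing $\mu_1,\mu_2,\mu_3$ for the multiplicities of the three even permutations and $\nu_1,\nu_2,\nu_3$ for the three odd ones, every decomposition has the form $(\mu_i+s,\ \nu_j-s)$ for $s$ in an interval $[s_-,s_+]$. Tracking the sign (which is $+1$ on the even and $-1$ on the odd permutations), the product of signs along the segment equals $(-1)^{O_0}(-1)^s$, where $O_0$ is the odd total at $s=0$. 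Therefore
\[
   C_L = (-1)^{O_0}\, m!\sum_{s=s_-}^{s_+} (-1)^s\,
   \frac{1}{\prod_{i}(\mu_i+s)!\,\prod_j(\nu_j-s)!},
\]
so that $C_L=0$ is equivalent to the vanishing of this alternating factorial sum $\Sigma$.

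I would then force $\Sigma$ to have exactly two terms by arranging $\min_i\mu_i=0$ and $\min_j\nu_j=1$, so the segment becomes $s\in\{0,1\}$ and
\[
   \Sigma=\frac{1}{\prod_i\mu_i!\,\prod_j\nu_j!}
   -\frac{1}{\prod_i(\mu_i+1)!\,\prod_j(\nu_j-1)!},
\]
which vanishes precisely when $(\mu_1+1)(\mu_2+1)(\mu_3+1)=\nu_1\nu_2\nu_3$. Using the minimal entries $\mu_3=0,\ \nu_3=1$ and setting $a=\mu_1+1,\ b=\mu_2+1,\ c=\nu_1,\ d=\nu_2$, the requirements become $ab=cd$ together with $a+b+c+d=m+1$. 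Taking $a=1$ collapses these to $(c+1)(d+1)=m+1$ with $c,d\ge1$, which is solvable exactly when $m+1$ is composite. For such $c,d$ the explicit matrix $L_0=(cd-1)P_{(1\,2\,3)}+c\,P_{(1\,2)}+d\,P_{(1\,3)}+P_{(2\,3)}$ lies in $\Psi(m)$ and has $C_{L_0}=0$; padding with the diagonal block $D$ then yields the desired $L$ for every $n\ge3$.

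The step needing the most care — and the real content — is the $n=3$ structural analysis: proving that the only linear relation among the six permutation matrices is the even-equals-odd one (so that the decomposition set is genuinely a single integer segment) and getting the sign factor $(-1)^s$ correct, since this is exactly what turns $C_L$ into a bona fide alternating sum. Once that closed form is established, the remaining heart of the matter is the elementary observation that ``$ab=cd$ with $a+b+c+d=m+1$'' is solvable if and only if $m+1$ is composite. I would also check that the two-term window conditions $\min_i\mu_i=0$, $\min_j\nu_j=1$ indeed hold for the chosen parameters, and that the smallest case $m=3$ (where $c=d=1$ and $L_0=J$) behaves as expected.
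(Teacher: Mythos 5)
Your proof is correct and is essentially the paper's own argument: the same reduction to $n=3$ via block-diagonal multiplicativity (the paper phrases it as restricting to permutations fixing $4,\ldots,n$), followed by the same two-term cancellation, with your witness $L_0=(cd-1)P_{(1\,2\,3)}+c\,P_{(1\,2)}+d\,P_{(1\,3)}+P_{(2\,3)}$ being the paper's matrix $L_3(a,b)$ up to a relabeling of rows and columns, and the same arithmetic fact that $(c+1)(d+1)=m+1$ is solvable in positive integers if and only if $m+1$ is composite. The only cosmetic difference is that you derive the ``exactly two decompositions'' property conceptually, from the unique linear relation among the six $3\times 3$ permutation matrices, whereas the paper obtains it by directly solving the nine linear equations attached to its specific matrix.
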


Thus, we see that the following two conditions on $m$ are equivalent, when $n \geq 3$:
\begin{itemize}
\item We have $C_L \ne 0$ for all $L \in \Psi(m)$.
\item There is a prime $p$ satisfying $m=p-1$.
\end{itemize}

%%%%%%%%%%%%%%%%%%%%%%%%%%%%%%%%%%%%%%%%%%%%%%%%%%%%%%%%%%%%%%%%%%%%%%%%%%%%%%%%%%
%
\section{The coefficients are all nonzero, when $m = p-1$}
%
%%%%%%%%%%%%%%%%%%%%%%%%%%%%%%%%%%%%%%%%%%%%%%%%%%%%%%%%%%%%%%%%%%%%%%%%%%%%%%%%%%
%
Theorem~\ref{thm:when m = p-1} was shown by D. G. Glynn in \cite{G2}.
The key of the proof is the hyperdeterminant
introduced by Glynn himself in \cite{G1}
(this hyperdeterminant occurs only for fields of prime characteristic $p$).
Nowadays, a proof of Theorem~\ref{thm:when m = p-1} 
without hyperdeterminant is also known \cite{K1}, \cite{K2}.

Theorem~\ref{thm:when m = p-1} is remarkable,
because this leads to a special case of the Alon--Tarsi conjecture on Latin squares.
Let $\operatorname{els}(n)$ and $\operatorname{ols}(n)$
denote the numbers of even and odd Latin squares of size $n$, respectively.
We can easily show $\operatorname{els}(n) = \operatorname{ols}(n)$ 
when $n$ is an odd number greater than $1$.
In contrast, on the case that $n$ is even,
the following conjecture was proposed by N. Alon and M. Tarsi \cite{AT}:

\begin{conjecture}
   When $n$ is even, we have
   $\operatorname{els}(n) \ne \operatorname{ols}(n)$.
\end{conjecture}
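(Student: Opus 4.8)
The plan is to reduce the conjecture to a single non-vanishing statement about the coefficients $C_L$ studied above, and then to invoke Theorem~\ref{thm:when m = p-1} in the range it covers. First I would recall the standard encoding of a Latin square of order $n$ as an ordered $n$-tuple $(\sigma_1,\dots,\sigma_n)$ of permutations in $S_n$ whose permutation matrices decompose the all-ones matrix, $P_{\sigma_1}+\dots+P_{\sigma_n}=J$, where $\sigma_s$ records the positions of the symbol $s$. Under the usual parity convention the sign of the square equals $\prod_s \operatorname{sgn}(\sigma_s)$ up to a global sign depending only on $n$, so that
\[
   \operatorname{els}(n)-\operatorname{ols}(n)
   = \pm \!\!\sum_{\substack{(\sigma_1,\dots,\sigma_n) \\ \sum_s P_{\sigma_s}=J}} \prod_{s=1}^n \operatorname{sgn}(\sigma_s).
\]

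Next I would match this signed count with a coefficient in $(\det X)^n$. Writing $(\det X)^n=\sum_{(\sigma_1,\dots,\sigma_n)}\prod_s\operatorname{sgn}(\sigma_s)\prod_{i,s}x_{i\sigma_s(i)}$ and reading off the coefficient of the squarefree monomial $x^J=\prod_{i,j}x_{ij}$, one sees that precisely the decompositions $\sum_s P_{\sigma_s}=J$ contribute, each with no extra multinomial factor since every exponent is $1$. Hence $\operatorname{els}(n)-\operatorname{ols}(n)=\pm\,C_J$ with $J\in\Psi(n)$ the all-ones matrix, and the conjecture becomes equivalent to the single assertion $C_J\neq 0$.

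The case $n=p-1$ with $p$ prime then follows at once: here $J\in\Psi(p-1)$, so Theorem~\ref{thm:when m = p-1} gives $C_J\neq 0$; in fact the Remark yields $J!\,C_J\equiv(-1)^n\pmod p$, and $J!=1$, whence $C_J\equiv(-1)^n\not\equiv 0\pmod p$. This recovers the conjecture whenever $n+1$ is prime, which is exactly Glynn's special case.

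The hard part — and the part that in fact remains open — is to establish $C_J\neq 0$ for the even $n$ with $n+1$ composite. The obvious route, imitating Glynn, would be to produce a prime $q$ with $C_J\not\equiv 0\pmod q$; but his congruence $L!\,C_L\equiv(-1)^n\pmod p$ is special to $m=p-1$ and the characteristic-$p$ hyperdeterminant, and collapses otherwise. Worse, Theorem~\ref{thm:when m ne p-1} shows that for such $m=n$ some coefficients $C_L$ genuinely vanish, so no blanket non-vanishing is available: one must argue that the single coefficient $C_J$ escapes exactly the cancellation that annihilates its neighbours. Controlling this delicate sign cancellation in the Latin-square sum uniformly over all composite $n+1$ is the main obstacle, and I would expect any proof to be concentrated almost entirely there.
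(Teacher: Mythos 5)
This statement is the Alon--Tarsi conjecture itself, which remains open: the paper does not prove it, but only records Glynn's special case $n=p-1$ (Theorem~\ref{thm:AT conjecture for p-1}), obtained exactly as you do --- via the identity $C_{J_n} = (-)^{n(n-1)/2}\bigl(\operatorname{els}(n)-\operatorname{ols}(n)\bigr)$, whose sign comes from Janssen's comparison of ordinary and symbol parity (the one nontrivial step you assert ``up to a global sign,'' which the paper likewise delegates to the reference [J]), combined with Theorem~\ref{thm:when m = p-1}. Your proposal is therefore correct as far as it goes and follows essentially the same route as the paper, including your accurate assessment that the case of composite $n+1$ is genuinely open and that Theorem~\ref{thm:when m ne p-1} rules out any blanket non-vanishing argument for it.
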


Glynn proved that this conjecture is true,
when $n = p-1$ with a prime $p$:

\begin{theorem}\label{thm:AT conjecture for p-1}\slshape
   For any prime $p$, we have
   $\operatorname{els}(p-1) \ne \operatorname{ols}(p-1)$.
\end{theorem}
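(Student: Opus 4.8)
The plan is to derive Theorem~\ref{thm:AT conjecture for p-1} from Glynn's Theorem~\ref{thm:when m = p-1} by specializing to $n = m = p-1$ and extracting a single combinatorially meaningful coefficient. Take $X$ to be of size $n = p-1$, set the exponent $m = p-1$, and let $J \in \Psi(m)$ be the all-ones matrix (every $l_{ij} = 1$); it lies in $\Psi(p-1)$ precisely because each of its row and column sums equals $n = p-1 = m$. The goal of the argument is to identify the coefficient $C_J$ of the squarefree monomial $x^J = \prod_{1 \le i,j \le n} x_{ij}$ with the Alon--Tarsi difference $\operatorname{els}(n) - \operatorname{ols}(n)$, up to sign.

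First I would expand the power directly as
\[
  (\det X)^n
  = \sum_{\sigma_1,\dots,\sigma_n \in S_n}
      \Bigl(\prod_{k=1}^n \operatorname{sgn}(\sigma_k)\Bigr)
      \prod_{i,k} x_{i,\sigma_k(i)},
\]
and note that the monomial $\prod_{i,k} x_{i,\sigma_k(i)}$ equals $x^J$ exactly when, for every pair $(i,j)$, there is a unique index $k$ with $\sigma_k(i) = j$. Writing $A(i,k) = \sigma_k(i)$, this is precisely the statement that $A$ is a Latin square of order $n$: its columns are permutations because each $\sigma_k$ is, and its rows are permutations by the uniqueness condition. This gives a bijection between the tuples $(\sigma_1,\dots,\sigma_n)$ contributing to $C_J$ and the Latin squares of order $n$, under which the weight $\prod_k \operatorname{sgn}(\sigma_k)$ becomes the product of the signs of the column permutations of $A$. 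Hence $C_J = \sum_A \varepsilon(A)$, a signed count of all Latin squares of order $n$, and it remains to match the sign $\varepsilon$ with the even/odd classification so that $C_J = \pm(\operatorname{els}(n) - \operatorname{ols}(n))$.

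Granting this identification, the theorem follows at once: Theorem~\ref{thm:when m = p-1}, applied with $n = p-1$ and $L = J \in \Psi(p-1)$, yields $C_J \ne 0$, whence $\operatorname{els}(p-1) \ne \operatorname{ols}(p-1)$. (The case $p = 2$ is the trivial order-one square, and $n = p-1$ is even for every odd prime, consistent with the conjecture being stated for even $n$.)

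I expect the sign bookkeeping in the second step to be the main obstacle: one must verify that the product of column-permutation signs delivered by the determinant is exactly the sign distinguishing even from odd Latin squares, and then reconcile the various conventions (row signs versus column signs, and the behaviour under permuting rows, columns, or symbols) in order to pin down the global sign relating $C_J$ to $\operatorname{els}(n) - \operatorname{ols}(n)$. A useful consistency check is that for odd $n > 1$ the same symmetry (swapping two rows, say) forces this signed count to vanish, which recovers the elementary fact $\operatorname{els}(n) = \operatorname{ols}(n)$ and confirms that $\varepsilon$ is the right sign.
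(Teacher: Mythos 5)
Your overall strategy is the paper's own: specialize Glynn's Theorem~\ref{thm:when m = p-1} to $m = n = p-1$ and the all-ones matrix $J_n \in \Psi(p-1)$, and identify $C_{J_n}$ with $\pm(\operatorname{els}(n)-\operatorname{ols}(n))$. Your first step is carried out correctly: the tuples $(\sigma_1,\dots,\sigma_n)$ contributing to $x^{J_n}$ biject with Latin squares via $A(i,k)=\sigma_k(i)$, and the weight is the product of the signs of the $n$ column permutations alone, so $C_{J_n}=\sum_A \epsilon_{\mathrm{col}}(A)$.

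The step you defer as ``sign bookkeeping,'' however, is the actual mathematical content, and the per-square verification you envisage would fail. The Alon--Tarsi parity is $\varepsilon(A)=\epsilon_{\mathrm{row}}(A)\,\epsilon_{\mathrm{col}}(A)$, the product of all $2n$ row and column signs, whereas the determinant delivers only $\epsilon_{\mathrm{col}}(A)$; the ratio of the two candidate signs is $\epsilon_{\mathrm{row}}(A)$, which is \emph{not} constant on Latin squares of a fixed even order. Indeed, if $\epsilon_{\mathrm{row}}$ were constant, then applying this to transposed squares would make $\epsilon_{\mathrm{col}}$ constant as well, forcing $\varepsilon\equiv +1$ and $\operatorname{ols}(n)=0$ for even $n$; this already fails at $n=6$, which lies in the scope of the theorem since $6=7-1$. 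So there is no term-by-term matching of signs: only the two signed \emph{sums} agree, and that equality is a nontrivial theorem of Janssen \cite{J}, which states that for every Latin square the product of the row, column, and symbol parities equals $(-1)^{n(n-1)/2}$. From this, $\varepsilon(A)=(-1)^{n(n-1)/2}\epsilon_{\mathrm{sym}}(A)$, and combining with the conjugation bijection exchanging the column and symbol coordinates (under which $\epsilon_{\mathrm{col}}$ corresponds to $\epsilon_{\mathrm{sym}}$) yields $\sum_A \epsilon_{\mathrm{col}}(A) = (-1)^{n(n-1)/2}\bigl(\operatorname{els}(n)-\operatorname{ols}(n)\bigr)$, i.e.\ exactly the identity $C_{J_n}=(-1)^{n(n-1)/2}(\operatorname{els}(n)-\operatorname{ols}(n))$ that the paper quotes from \cite{J}. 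Your odd-$n$ consistency check only confirms $\sum_A \epsilon_{\mathrm{col}}(A)=0$ there, which any of the competing sign conventions would also satisfy, so it cannot certify the even-$n$ identification; to close the proof you must invoke Janssen's parity relation or prove an equivalent statement.
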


This is deduced from Theorem~\ref{thm:when m = p-1}
by looking at the coefficient corresponding to the all-ones matrix 
$J_n = (1)_{1 \leq i,j \leq n} \in \Psi(n)$.
Indeed we have the relation 
\[
   C_{J_n} = 
   (-)^{n(n-1)/2} 
   (\operatorname{els}(n) - \operatorname{ols}(n))
\]
as a corollary of the relation 
between the ordinary parity and the symbol parity of Latin squares given in \cite{J}.

\begin{remark}
   The Alon--Tarsi conjecture is also proved when $n=p+1$ with an odd prime $p$
   \cite{D}.
   This conjecture is related to various problems including Rota's basis conjecture.
   See \cite{FM} for the results related to this conjecture.
\end{remark}

%%%%%%%%%%%%%%%%%%%%%%%%%%%%%%%%%%%%%%%%%%%%%%%%%%%%%%%%%%%%%%%%%%%%%%%%%%%%%%%%%%
%
\section{There exists a zero coefficient, when $m \ne p-1$}
%
%%%%%%%%%%%%%%%%%%%%%%%%%%%%%%%%%%%%%%%%%%%%%%%%%%%%%%%%%%%%%%%%%%%%%%%%%%%%%%%%%%
%
Let us prove Theorem~\ref{thm:when m ne p-1}.
This theorem was first found in Master's thesis of the second author \cite{S}.

Let $m$ be a natural number which cannot be expressed as $m = p-1$ with a prime $p$.
We can specifically find $L \in \Psi(m)$ satisfying $C_L = 0$ as follows. 
When $n=3$, 
we consider the following $3$ by $3$ matrix:
\[
   L_3(a,b) = 
   \begin{pmatrix}
   ab+b-1 & a & 1 \\
   a & ab & b \\
   1 & b & ab+a-1
   \end{pmatrix}.
\]
Here, $a$ and $b$ are natural numbers satisfying $(a+1)(b+1) = m+1$
(there exist such $a$ and $b$, because $m+1$ is a composite number).
For general $n \geq 3$, we consider the following $n$ by $n$ matrix:
\[
   L_n(a,b) = 
   \begin{pmatrix}
   L_3(a,b) &  & & \\
    & m & & & \\
    & & \ddots & & \\
    & & & & m
   \end{pmatrix}.
\]
Then the coefficient corresponding to this matrix is zero:

\begin{proposition}\slshape
   When $n \geq 3$,
   we have $C_{L_n(a,b)} = 0$.
\end{proposition}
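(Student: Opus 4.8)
The plan is to interpret $C_L$ combinatorially as a signed count of decompositions of $L$ into permutation matrices, exploit the block structure to reduce to the $3 \times 3$ case, and then finish that case by a direct enumeration. Writing $(\det X)^m$ as a product of $m$ copies of $\det X = \sum_{\sigma \in S_n} \operatorname{sgn}(\sigma) \prod_i x_{i,\sigma(i)}$ and reading off the coefficient of $x^L$, one obtains
\[
   C_L = \sum_{(\sigma_1,\ldots,\sigma_m)} \operatorname{sgn}(\sigma_1)\cdots\operatorname{sgn}(\sigma_m),
\]
the sum running over all ordered $m$-tuples of permutations in $S_n$ whose permutation matrices sum to $L$, i.e.\ $\#\{k : \sigma_k(i)=j\} = l_{ij}$ for all $i,j$.

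First I would reduce to $n=3$. In $L_n(a,b)$ the diagonal entry of each row $4,\ldots,n$ equals $m$, which forces every $\sigma_k$ in any admissible tuple to fix each of $4,\ldots,n$. Such a permutation has the same sign as its restriction to $\{1,2,3\}$, so the signed count for $L_n(a,b)$ coincides with that for $L_3(a,b)$, and it suffices to prove $C_{L_3(a,b)} = 0$.

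Next I would enumerate the admissible tuples for $L_3(a,b)$. Grouping a tuple by how many times each of the six elements of $S_3$ occurs (say $n_e$ copies of the identity, $n_1,n_2$ of the two $3$-cycles, and $m_{12},m_{13},m_{23}$ of the three transpositions) turns the matrix condition into a small linear system in these six occurrence counts, and the signed count becomes
\[
   C_{L_3(a,b)} = \sum \binom{m}{n_e,\,n_1,\,n_2,\,m_{12},\,m_{13},\,m_{23}} (-1)^{m_{12}+m_{13}+m_{23}},
\]
summed over the nonnegative integer solutions. Solving this system, I expect exactly two solutions, distinguished by whether the two $3$-cycles are used zero times or once each; the hypothesis $a,b \geq 1$ (equivalently, $m+1$ composite) is precisely what makes both solutions genuinely nonnegative.

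The final and decisive step is to check that these two contributions cancel. A direct comparison shows the two multinomial coefficients have equal absolute value—their ratio telescopes to $\tfrac{1}{ab}\cdot a \cdot b = 1$—while the accompanying signs come out as $(-1)^{a+b+1}$ and $(-1)^{a+b}$, which are opposite. Hence the two terms sum to zero and $C_{L_3(a,b)}=0$. The main obstacle is purely bookkeeping: setting up the linear system correctly (tracking which cell of $L_3(a,b)$ each element of $S_3$ feeds into) and verifying that the factorials in the two multinomials match up exactly. Everything else is routine.
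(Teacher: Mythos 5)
Your proposal is correct and follows essentially the same route as the paper: your signed count over tuples of permutations grouped by occurrence counts is exactly the paper's multinomial expansion of $(\det X)^m$ in the six monomials of $\det X$, your reduction to $n=3$ via the forced fixed points $4,\ldots,n$ is the paper's reduction, and your two solutions (3-cycles used zero times versus once each) are precisely the paper's $(ab-1,0,0,a,b,1)$ and $(ab,1,1,a-1,b-1,0)$, which cancel because the multinomial coefficients agree while the signs $(-1)^{a+b+1}$ and $(-1)^{a+b}$ are opposite. The only remaining work is the bookkeeping you flag, which goes through exactly as you predict.
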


This proposition follows from the following two lemmas.
Firstly, $C_{L_n(a,b)}$ is expressed as the difference of two multinomial coefficients:

\begin{lemma}\label{lem:difference of two multinomial coefficients}\slshape
   We have 
   \[
      C_{L_n(a,b)} = 
      (-)^{a+b+1} {m \choose ab-1,0,0,a,b,1}
      +(-)^{a+b}{m \choose ab,1,1,a-1,b-1,0},
   \]
   where
\[
   {m \choose m_1,m_2,m_3,m_4,m_5,m_6} = \frac{m!}{m_1! m_2! m_3! m_4! m_5! m_6!}.
%%   {m \choose k_1,k_2,k_3,l_1,l_2,l_3} = \frac{m}{k_1! k_2! k_3! l_1! l_2! l_3!}.
\]
\end{lemma}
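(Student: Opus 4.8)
The plan is to use the standard combinatorial reading of the coefficients $C_L$. Expanding $(\det X)^m = \bigl(\sum_{\sigma \in S_n} \operatorname{sgn}(\sigma)\, x_{1\sigma(1)}\cdots x_{n\sigma(n)}\bigr)^m$ by the multinomial theorem, the monomial $x^L$ arises exactly from the ordered $m$-tuples $(\sigma_1,\dots,\sigma_m)$ of permutations whose permutation matrices sum to $L$, that is $P_{\sigma_1}+\cdots+P_{\sigma_m}=L$, each such tuple contributing $\prod_k \operatorname{sgn}(\sigma_k)$. Grouping tuples by the multiset of permutations they use, I would record the multiplicity $c_\sigma$ of each $\sigma$ and rewrite
\[
   C_L = \sum_{(c_\sigma)} \binom{m}{(c_\sigma)_{\sigma}} \prod_{\sigma} \operatorname{sgn}(\sigma)^{c_\sigma},
\]
where the sum runs over all families of nonnegative integers $(c_\sigma)_{\sigma\in S_n}$ with $\sum_\sigma c_\sigma P_\sigma = L$ (forcing $\sum_\sigma c_\sigma = m$ automatically).

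Next I would reduce to the $3\times 3$ case. Since $L_n(a,b)$ is block diagonal with $(i,i)$ entry equal to $m$ for every $i>3$ and all other entries in those rows and columns equal to $0$, the identity $\sum_\sigma c_\sigma (P_\sigma)_{ij}=0$ for $j\neq i$ forces $\sigma(i)=i$ for every $\sigma$ with $c_\sigma>0$. Thus each contributing $\sigma$ restricts to a permutation of $\{1,2,3\}$, only the six elements of $S_3$ appear, and $C_{L_n(a,b)} = C_{L_3(a,b)}$, the latter being computed with $S_3$ and the matrix $L_3(a,b)$.

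The heart of the argument is then to solve the linear system $\sum_{\sigma\in S_3} c_\sigma P_\sigma = L_3(a,b)$ for the six multiplicities. Writing out the nine entrywise equations, the two coming from the $(1,3)$ and $(3,1)$ positions read $c_{(13)}+c_{(132)} = 1$ and $c_{(13)}+c_{(123)} = 1$; these force $c_{(123)}=c_{(132)}=:t$ and $c_{(13)}=1-t$, so nonnegativity leaves only $t\in\{0,1\}$. This is the key step, since the entry $1$ in those off-diagonal positions of $L_3(a,b)$ is exactly what collapses an a priori large solution set down to two points. Back-substitution gives the multiplicities $(ab-1,a,1,b,0,0)$ for $t=0$ and $(ab,a-1,0,b-1,1,1)$ for $t=1$, where the entries are listed in the order $e,(12),(13),(23),(123),(132)$.

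Finally I would read off the sign and multinomial weight of each solution. Using $\operatorname{sgn}(e)=\operatorname{sgn}((123))=\operatorname{sgn}((132))=1$ and $\operatorname{sgn}((12))=\operatorname{sgn}((13))=\operatorname{sgn}((23))=-1$, the $t=0$ term carries sign $(-)^{a+b+1}$ with multinomial coefficient $\binom{m}{ab-1,a,1,b,0,0}$, and the $t=1$ term carries sign $(-)^{a+b}$ with coefficient $\binom{m}{ab,a-1,0,b-1,1,1}$. A quick check using $(a+1)(b+1)=m+1$ confirms that each family sums to $m$, and since the multinomial coefficient is symmetric in its lower arguments these two terms coincide with those in the statement, proving the lemma.
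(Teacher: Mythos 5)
Your proposal is correct and follows essentially the same route as the paper: expand $(\det X)^m$ by the multinomial theorem over the six terms of the $3\times 3$ determinant (equivalently, over permutation multiplicities $c_\sigma$), reduce the case $n>3$ to $n=3$ by observing that the diagonal entries $m$ force $\sigma(i)=i$ for $i>3$, and solve the entrywise linear system to find exactly the two solutions $(ab-1,a,1,b,0,0)$ and $(ab,a-1,0,b-1,1,1)$, whose signs $(-)^{a+b+1}$ and $(-)^{a+b}$ give the stated identity. Your additional observation that the entries $1$ at positions $(1,3)$ and $(3,1)$ are what collapse the solution set to two points makes explicit a step the paper leaves implicit ("solving this, we see that"), but the argument is the same.
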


Secondly, these two multinomial coefficients are equal to each other:

\begin{lemma}\label{lem:two multinomial coefficients are equal}\slshape
   We have
   \[
      {m \choose ab-1,0,0,a,b,1}
      = {m \choose ab,1,1,a-1,b-1,0}.
   \]
\end{lemma}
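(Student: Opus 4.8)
The plan is to expand both multinomial coefficients into explicit products of factorials and reduce the claimed equality to an elementary factorial identity. Using the definition and the fact that $0! = 1! = 1$, the left-hand side is
\[
   {m \choose ab-1,0,0,a,b,1} = \frac{m!}{(ab-1)!\,0!\,0!\,a!\,b!\,1!} = \frac{m!}{(ab-1)!\,a!\,b!},
\]
while the right-hand side is
\[
   {m \choose ab,1,1,a-1,b-1,0} = \frac{m!}{(ab)!\,1!\,1!\,(a-1)!\,(b-1)!\,0!} = \frac{m!}{(ab)!\,(a-1)!\,(b-1)!}.
\]

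Next I would cancel the common factor $m!$ and clear denominators, so that the asserted identity becomes
\[
   (ab)!\,(a-1)!\,(b-1)! = (ab-1)!\,a!\,b!.
\]
This I would verify by peeling off a single factor from each of the ``larger'' factorials via $(ab)! = ab\,(ab-1)!$, $a! = a\,(a-1)!$, and $b! = b\,(b-1)!$. After these substitutions both sides collapse to $ab\,(ab-1)!\,(a-1)!\,(b-1)!$, which establishes the equality.

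There is no genuine obstacle here: the statement is a purely formal identity among factorials, and the computation above is essentially the whole proof. The only point deserving a word of care is that every factorial argument be a nonnegative integer, so that each factor is well-defined; this holds because $a$ and $b$ are natural numbers, whence $a-1$, $b-1$, and $ab-1$ are all nonnegative. Notably, neither the primality hypotheses nor any arithmetic property of $m$ enters, and none of the combinatorial meaning of the coefficients $C_L$ is needed — the lemma is a bookkeeping identity valid for all natural numbers $a$ and $b$, and the real content of the Proposition is carried by Lemma~\ref{lem:difference of two multinomial coefficients}.
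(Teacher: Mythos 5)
Your proof is correct and is precisely the ``direct calculation'' the paper invokes without writing out: both multinomial coefficients reduce to $m!$ divided by $(ab-1)!\,a!\,b!$ and $(ab)!\,(a-1)!\,(b-1)!$ respectively, and these denominators agree since $(ab)!\,(a-1)!\,(b-1)! = ab\,(ab-1)!\,(a-1)!\,(b-1)! = (ab-1)!\,a!\,b!$. Your remark that the identity needs only $a,b \geq 1$ (so all factorial arguments are nonnegative) and no arithmetic property of $m$ is also accurate.
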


Lemma~\ref{lem:two multinomial coefficients are equal} follows by a direct calculation,
and Lemma~\ref{lem:difference of two multinomial coefficients} is proved as follows:

\begin{proof}[Proof of Lemma~\ref{lem:difference of two multinomial coefficients}]
First we consider the case of $n=3$.
We put
\begin{align*}
   \alpha_1 &= x_{11} x_{22} x_{33}, &
   \alpha_2 &= x_{12} x_{23} x_{31}, &
   \alpha_3 &= x_{13} x_{21} x_{32}, \\
   \beta_1 &= x_{12} x_{21} x_{33}, &
   \beta_2 &= x_{11} x_{23} x_{32}, &
   \beta_3 &= x_{13} x_{22} x_{31},
\end{align*}
such that
\[
   \det X = \alpha_1 + \alpha_2 + \alpha_3 - \beta_1 - \beta_2 - \beta_3,
\]
and $(\det X)^m$ is expanded as follows:
\begin{equation}
   (\det X)^m = \sum_{k_1 + k_2 + k_3 + l_1 + l_2 + l_3 = m}
   (-)^{l_1+l_2+l_3} {m \choose k_1,k_2,k_3,l_1,l_2,l_3}
   \alpha_1^{k_1} \alpha_2^{k_2} \alpha_3^{k_3} \beta_1^{l_1} \beta_2^{l_2} \beta_3^{l_3}.
\label{eq:multinomial expansion}
\end{equation}
Let us determine all $6$-tuples $(k_1,k_2,k_3,l_1,l_2,l_3)$ of nonnegative integers 
satisfying the following relation:
\begin{equation}
   \alpha_1^{k_1} \alpha_2^{k_2} \alpha_3^{k_3} \beta_1^{l_1} \beta_2^{l_2} \beta_3^{l_3}
   = x^{L_3(a,b)}.
\label{eq:x^L_3}
\end{equation}
Since the left hand side can be expressed as
\begin{align*}
   \alpha_1^{k_1} \alpha_2^{k_2} \alpha_3^{k_3} \beta_1^{l_1} \beta_2^{l_2} \beta_3^{l_3}
   & = 
   x_{11}^{k_1 + l_2}  
   x_{12}^{k_2 + l_1}  
   x_{13}^{k_3 + l_3} \\   
   & \phantom{{}={}} 
   x_{21}^{k_3 + l_1}  
   x_{22}^{k_1 + l_3}  
   x_{23}^{k_2 + l_2} \\
   & \phantom{{}={}} 
   x_{31}^{k_2 + l_3}  
   x_{32}^{k_3 + l_2} 
   x_{33}^{k_1 + l_1},
\end{align*}
this relation is equivalent with the following system of nine linear equations:
\[
   \begin{pmatrix}
   k_1 + l_2 & k_2 + l_1 & k_3 + l_3 \\   
   k_3 + l_1 & k_1 + l_3 & k_2 + l_2 \\
   k_2 + l_3 & k_3 + l_2 & k_1 + l_1    
   \end{pmatrix} 
   = L_3(a,b)
   = 
   \begin{pmatrix}
   ab+b-1 & a & 1 \\
   a & ab & b \\
   1 & b & ab+a-1
   \end{pmatrix}.
\]
Solving this, we see that
$(k_1,k_2,k_3,l_1,l_2,l_3) \in \mathbb{Z}_{\geq 0}^6$ satisfying (\ref{eq:x^L_3}) are
\[
   (ab-1,0,0,a,b,1), \qquad
   (ab,1,1,a-1,b-1,0).
\]
Therefore, comparing (\ref{eq:definition of C_L}) and (\ref{eq:multinomial expansion}),
we have
\[
   C_{L_3(a,b)} = 
   (-)^{a+b+1}   {m \choose ab-1,0,0,a,b,1}
   +(-)^{(a-1) + (b-1) + 0} {m \choose ab,1,1,a-1,b-1,0},
\]
namely the assertion in the case of $n=3$.

The case of $n > 3$ is also almost the same. 
Indeed,
to calculate $C_{L_n(a,b)}$,
we need to look at the following relation instead of (\ref{eq:x^L_3}):
\[
   \prod_{1\leq i\leq m}x_{1\sigma_i(1)}x_{2\sigma_i(2)}\cdots x_{n\sigma_i(n)} = x^{L_n(a,b)}.
\]
Since $\sigma_1, \ldots, \sigma_m$ satisfying this relation belong to
\[
   \{ \sigma \in S_n \,|\, \text{$\sigma(k) = k$ for any $k = 4,5,\ldots,n$}\} \simeq S_3,
\]
the proof is reduced to the case of $n=3$.
\end{proof}

%%%%%%%%%%%%%%%%%%%%%%%%%%%%%%%%%%%%%%%%%%%%%%%%%%%%%%%%%%%%%%%%%%%%%%%%%%
%
% References
%
%%%%%%%%%%%%%%%%%%%%%%%%%%%%%%%%%%%%%%%%%%%%%%%%%%%%%%%%%%%%%%%%%%%%%%%%%%
%

\end{document}